\newtheorem{theorem}{Theorem}
\newtheorem{lemma}[theorem]{Lemma}
\theoremstyle{definition}
\newtheorem{proposition}[theorem]{Proposition}
\newtheorem{corollary}[theorem]{Corollary}
\theoremstyle{remark}
\newtheorem{remark}[theorem]{Remark}
\numberwithin{equation}{section}
\begin{document}

\title[Gradient Ricci solitons with constant scalar curvature]{On gradient Ricci solitons with constant scalar curvature}

\author{Manuel Fern\'{a}ndez-L\'{o}pez and Eduardo Garc\'{\i}a-R\'{\i}o}
\thanks{Supported by projects GRC2013-045 and MTM2013-41335-P with FEDER funds (Spain)}
\address{IES Mar\'ia Sarmiento, Conseller\'ia de educaci\'on, Xunta de Galicia, Spain} \email{manufl@edu.xunta.es}
\address{Faculty of Mathematics, University of Santiago de Compostela, Spain} \email{eduardo.garcia.rio@usc.es}

\subjclass[2010]{53C25, 53C20, 53C44}

\keywords{Gradient Ricci soliton, scalar curvature.}

\begin{abstract}
We use the theory of isoparametric functions to investigate gradient Ricci solitons with constant scalar curvature. We show rigidity of gradient Ricci solitons with constant scalar curvature under some conditions on the Ricci tensor, which are all satisfied if the manifold is curvature homogeneous. This leads to a complete description of four- and six-dimensional K\"ahler gradient Ricci solitons with constant scalar curvature.
\end{abstract}

\maketitle

\section{Introduction and main results}

A complete $n$-dimensional Riemannian manifold $(M,g)$ is said to be a \emph{gradient Ricci soliton} if there exists a smooth function $f$ on $M$ such that
\begin{equation}\label{gradientsoliton}
Rc+H_f=\lambda g,
\end{equation}
where $Rc$ is the Ricci tensor, $H_f$ denotes the Hessian of the function $f$, and $\lambda$ is a real number. For $\lambda >0$ the Ricci soliton is \emph{shrinking}, for $\lambda=0$ it is \emph{steady} and for $\lambda <0$ it is \emph{expanding}. The function $f$ is called the \emph{potential function} of the gradient Ricci soliton. Gradient Ricci solitons play an important role in Hamilton's Ricci flow as they correspond to self-similar solutions, and often arise as singularity models. Therefore it is important to classify gradient Ricci solitons and to understand their geometry. We refer to  \cite{C, Chow, CLN} and the references therein for background on Ricci solitons.

The Ricci soliton equation \eqref{gradientsoliton} links geometric information about the curvature of the manifold through the Ricci tensor and the geometry of the level sets of the potential function by means of their second fundamental form. Hence, classifying gradient Ricci solitons under some curvature conditions is a natural problem. Gradient Ricci solitons with constant scalar curvature were investigated by Petersen and Wylie in \cite{PW1}, who showed that constant scalar curvature is a very restrictive condition in the steady case, since it leads to Ricci flatness. Therefore we focus on the non-steady case in what follows.

If a non-steady gradient Ricci soliton has constant scalar curvature $R$, then it is bounded as $0\leq R\leq n\lambda$ in the shrinking case, and  $n\lambda\leq R\leq 0$ in the expanding case. Our first result shows that the possible values of the scalar curvature are quantified by the soliton constant $\lambda$ as follows

\begin{theorem}\label{csc}
Let $(M^n,g)$ be an $n$-dimensional complete gradient Ricci soliton with constant scalar curvature $R.$ Then $R\in\{0,\lambda,\cdots , (n-1)\lambda, n\lambda\}.$
\end{theorem}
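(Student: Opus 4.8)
The plan is to exploit the structure of gradient Ricci solitons through the well-known identities that the soliton equation forces on the scalar curvature and the potential function. The key quantities are $R$ (the scalar curvature), $|\nabla f|^2$, and the function $f$ itself. For any gradient Ricci soliton, taking the trace of \eqref{gradientsoliton} gives $R + \Delta f = n\lambda$, and there are two classical conservation laws: first, $\nabla R = 2\,Rc(\nabla f, \cdot)$, obtained from the contracted second Bianchi identity; and second, the integrated identity $R + |\nabla f|^2 - 2\lambda f = \mathrm{const}$. My first step is to record these identities carefully.

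Now suppose $R$ is constant. Then $\nabla R = 0$, so the Bianchi identity collapses to $Rc(\nabla f, \cdot) = 0$, meaning $\nabla f$ lies in the kernel of the Ricci tensor at every point. In particular $Rc(\nabla f, \nabla f) = 0$. Combining this with the Hessian applied to $\nabla f$: from the conservation law $R + |\nabla f|^2 - 2\lambda f = \mathrm{const}$ and $R$ constant, we get $\nabla |\nabla f|^2 = 2\lambda \nabla f$, i.e. $H_f(\nabla f, \cdot) = \lambda\, df$. Plugging $X = \nabla f$ into the soliton equation \eqref{gradientsoliton} then yields $Rc(\nabla f,\nabla f) + H_f(\nabla f,\nabla f) = \lambda |\nabla f|^2$, which is automatically consistent, so the real content must come from viewing the Ricci endomorphism more globally.

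The heart of the argument is to show that the constant eigenvalue structure of $Rc$ is severely restricted. Since $R$ is constant and $Rc(\nabla f,\cdot)=0$, the function $f$ behaves as an isoparametric function: the level sets of $f$ are the regular fibers, and the endomorphism $\mathrm{Rc}$ and the shape operator of these level sets interact through the soliton equation $Rc = \lambda\,\mathrm{Id} - H_f$. The plan is to study the Ricci eigenvalues along the integral curves of $\nabla f/|\nabla f|$ and show, using the identity $\nabla R = 0$ together with the evolution of the Ricci tensor forced by \eqref{gradientsoliton}, that each eigenvalue of $Rc$ must be constant and equal to $0$ or $\lambda$. Since $R = \mathrm{tr}\,Rc$ is the sum of the $n$ eigenvalues, if exactly $k$ of them equal $\lambda$ and the remaining $n-k$ equal $0$, then $R = k\lambda$ with $0 \le k \le n$, giving precisely $R \in \{0, \lambda, 2\lambda, \dots, n\lambda\}$.

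The main obstacle I anticipate is proving that the Ricci eigenvalues can only take the two values $0$ and $\lambda$, rather than merely being constant. Constancy of $R$ alone does not force constancy of the individual eigenvalues, so the isoparametric-function machinery advertised in the abstract is presumably what controls their behavior along the flow of $\nabla f$: one expects a Riccati-type equation for the shape operator of the level sets, coupled to the soliton equation, whose bounded solutions along complete geodesics (emanating from the analysis of $f$ as an isoparametric function) pin the eigenvalues to the fixed points $0$ and $\lambda$. Carrying out this eigenvalue dichotomy rigorously — ruling out intermediate constant eigenvalues — is where the essential work lies.
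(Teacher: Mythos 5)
Your preliminary identities ($R+\Delta f=n\lambda$, $\nabla R=2\,Rc(\nabla f,\cdot)$, $|\nabla f|^2+R-2\lambda f=\mathrm{const}$, hence $f$ is isoparametric when $R$ is constant) match the paper's setup, and you correctly guess the answer $R=k\lambda$. But the step you defer --- showing that \emph{every} Ricci eigenvalue on \emph{all} of $M$ is constant and equal to $0$ or $\lambda$ --- is not merely the hard part; it is the wrong target. A global dichotomy $R_i\in\{0,\lambda\}$ gives $0\leq Rc\leq\lambda$, which by Petersen--Wylie \cite[Proposition 1.3]{PW1} is equivalent to rigidity of the soliton; the paper itself cannot establish this in general (its Theorem \ref{rigidity} needs the additional hypothesis that $Rc$ has constant rank, and Theorem \ref{thm-3} leaves the case $R=2\lambda$, $n=4$ unresolved). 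So the Riccati-type analysis along the flow of $\nabla f/|\nabla f|$ that you hope will ``pin the eigenvalues to the fixed points $0$ and $\lambda$'' cannot succeed in the generality you need, and your proposal supplies no mechanism for ruling out intermediate constant eigenvalues --- you acknowledge this yourself, which means the essential content of the proof is absent.

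The paper's actual idea is to localize at the critical set of $f$ instead of arguing globally. First (Lemma \ref{le-1}) one shows a focal variety is non-empty: normalizing so that $2\lambda f=|\nabla f|^2$, if $\nabla f$ never vanished then $r=\sqrt{2f/\lambda}$ would satisfy the eikonal equation $|\nabla r|=1$ on a complete manifold, forcing the range of $r$ to be all of $\mathbb{R}$, contradicting $r\geq 0$; hence in the shrinking case $f$ attains a minimum (a maximum in the expanding case, and the steady case is already Ricci flat by \cite{PW1}). Second, Wang's structure theorem for transnormal functions \cite{W} states that on the focal variety $M_\pm$ the Hessian $H_f$ has exactly the eigenvalues $0$ (on $TM_\pm$) and $\tfrac12 b'(f)=\lambda$ (on $TM_\pm^{\perp}$). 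Substituting into $Rc=\lambda g-H_f$ shows that \emph{at points of $M_\pm$} the Ricci operator has only the eigenvalues $\lambda$ and $0$, so $R=k\lambda$ there with $k=\operatorname{rank} Rc|_{M_\pm}$, and constancy of $R$ propagates this value to all of $M$. No control of the Ricci eigenvalues away from the critical set is required. Your proposal is missing both ingredients: the existence of critical points of $f$, and the use of Wang's Hessian dichotomy at those points rather than a (false, or at least unobtainable) global eigenvalue rigidity.
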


Petersen and Wylie  showed that the extremal values for the scalar curvature in Theorem \ref{csc} are achieved if and only if the underlying Riemannian structure is Einstein \cite{PW1}.
We investigate the realizability of the extremal values $R=\lambda$ and $R=(n-1)\lambda$, showing that the value  $R=\lambda$ cannot occur in the shrinking case, while any complete gradient Ricci soliton with constant scalar curvature $R=(n-1)\lambda$ is necessarily rigid (see Theorem \ref{rigid3l}).

A gradient soliton is said to be \emph{rigid} if it is isometric to a quotient of $N\times \mathbb{R}^k$ where $N$ is an Einstein manifold and $f=\frac{\lambda}{2}|x|^2$ on the Euclidean factor. That is, the Riemannian manifold $(M,g)$ is isometric to $N\times_{\Gamma}\mathbb{R}^k$, where $\Gamma$ acts freely on $N$ and by orthogonal transformations on $\mathbb{R}^k$. Clearly any rigid gradient Ricci soliton has constant scalar curvature. When $M$ is compact, a Ricci soliton $(M,g)$ is rigid if and only if it is Einstein, and the constancy of the scalar curvature characterizes rigidity as a consequence of the Hopf maximum principle (see, for example \cite{ELNM}). In the more general setting of complete non-compact gradient Ricci solitons, Petersen and Wylie showed rigidity under some additional assumptions \cite{PW1}.
One of such conditions is the harmonicity of the Riemann curvature tensor. In the shrinking case, a gradient Ricci soliton is rigid if and only if the Weyl tensor is harmonic \cite{FLGR, MS}.

Our main result shows rigidity of gradient Ricci solitons with constant scalar curvature under the assumption that the Ricci operator has constant rank, from where it follows that curvature homogeneous gradient Ricci solitons are rigid.

\begin{theorem}\label{rigidity}
A complete gradient Ricci soliton is rigid if and only if the Ricci operator has constant rank.
\end{theorem}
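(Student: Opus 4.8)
The plan is to combine the constant scalar curvature identities of a gradient Ricci soliton with the theory of isoparametric functions, and to exploit the constant rank hypothesis to produce a de Rham splitting; throughout, constant scalar curvature is the framework of the present paper. I would dispose of the forward implication first: on a rigid soliton $N\times_{\Gamma}\mathbb{R}^{k}$ the potential $f=\frac{\lambda}{2}|x|^{2}$ depends only on the Euclidean variables, so $H_f=\lambda g$ on the flat factor and $H_f=0$ on $N$. The soliton equation \eqref{gradientsoliton} then forces $Rc=\lambda g$ on $N$ and $Rc=0$ on $\mathbb{R}^{k}$, so the Ricci operator $S$, defined by $g(SX,Y)=Rc(X,Y)$, has eigenvalues $\lambda$ and $0$ with constant multiplicities $\dim N$ and $k$. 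Hence $S$ has constant rank $\dim N$, and this is preserved under the isometric quotient by $\Gamma$.

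For the converse I would first record the consequences of constant scalar curvature $R$: the contracted second Bianchi identity $\tfrac12\nabla R=S(\nabla f)$ gives $S(\nabla f)=0$, the trace of \eqref{gradientsoliton} gives $\Delta f=n\lambda-R$ constant, and $R+|\nabla f|^{2}-2\lambda f$ is constant, so that $|\nabla f|^{2}$ is an affine function of $f$. Thus $f$ is isoparametric and its regular level sets are isoparametric hypersurfaces. Since $S$ is symmetric and has constant rank $r$, the kernel $\ker S$ is a smooth distribution of constant dimension $n-r$, which gives the orthogonal splitting $TM=\ker S\oplus\operatorname{Im}S$ with $\operatorname{Im}S=(\ker S)^{\perp}$; moreover $\nabla f\in\ker S$ everywhere, by the identity above.

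The heart of the argument will be to show that both distributions are parallel. Differentiating the soliton equation yields the identity $(\nabla_X S)Y-(\nabla_Y S)X=-R(X,Y)\nabla f$; setting $X=\nabla f$ and using $S(\nabla f)=0$ I obtain the evolution $(\nabla_{\nabla f}S)Y=S(S-\lambda)Y-R(\nabla f,Y)\nabla f$ of the Ricci operator along the integral curves of $\nabla f$. Evaluating on a unit eigenvector $E$ with $SE=\mu E$ produces the Riccati type equation $\mu'=\mu(\mu-\lambda)-\sec(\nabla f,E)\,|\nabla f|^{2}$ for the Ricci eigenvalues along these curves. For $E\in\ker S$ the persistence of the zero eigenvalue forced by constant rank gives $\sec(\nabla f,E)=0$, and I would then use the completeness of the normal geodesics together with the boundedness of the spectrum of $S$ to show that all radial sectional curvatures vanish and that every nonzero eigenvalue equals $\lambda$. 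This radial flatness is precisely the statement that $\ker S$ and $\operatorname{Im}S$ are parallel, and it is the rigidity criterion isolated by Petersen and Wylie \cite{PW1}. I expect this to be the main obstacle: extracting the global vanishing of the radial curvatures and the constancy of the eigenvalues from the constant rank hypothesis, where completeness and the behaviour of $f$ at the focal sets of its level sets are essential.

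Once both distributions are parallel, I would pass to the universal cover and apply the de Rham decomposition theorem to split $(M,g)$ as a Riemannian product $N\times\mathbb{R}^{k}$ with $k=n-r$, the factor tangent to $\operatorname{Im}S$ being Einstein with $Rc=\lambda g$ and the factor tangent to $\ker S$ being Ricci flat. On the latter the soliton equation reduces to $\nabla^{2}f=\lambda\,\mathrm{Id}$ with $\nabla f$ spanning the factor, which integrates to the flat metric with $f=\frac{\lambda}{2}|x|^{2}$; hence that factor is Euclidean and $(M,g)$ is rigid. Finally, a curvature homogeneous gradient Ricci soliton has Ricci operator with constant eigenvalues, hence constant rank, so such solitons are rigid as claimed.
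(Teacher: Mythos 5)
Your forward implication and your initial set-up (constant scalar curvature makes $f$ isoparametric, $S(\nabla f)=0$, constant rank makes $\ker S$ a smooth distribution containing $\nabla f$) agree with the paper. But the converse has a genuine gap exactly where you yourself locate ``the main obstacle'': you never prove that every nonzero Ricci eigenvalue equals $\lambda$, nor that the radial curvatures vanish in the image directions. Your Riccati equation $\mu'=\mu(\mu-\lambda)-\sec(\nabla f,E)\,|\nabla f|^{2}$ does yield $\sec(\nabla f,E)=0$ for $E\in\ker S$ (by constancy of the multiplicity of the zero eigenvalue), but for a nonzero eigenvalue the equation contains the radial curvature term, which has no a priori sign or size, so completeness plus boundedness of the spectrum gives no workable ODE comparison; and the assertion that radial flatness ``is precisely'' parallelism of $\ker S$ and $\operatorname{Im}S$ is itself unproved --- this is a nontrivial part of Petersen--Wylie's analysis, not a tautology. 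So the proposal stalls at its central step, and the de Rham splitting you plan afterwards never gets off the ground.

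The two ingredients you are missing, which turn the paper's proof into a short pointwise computation, are: (1) the identity $\Delta_f R=\lambda R-|Rc|^{2}$, which under constant scalar curvature gives $|Rc|^{2}=\lambda R$ at every point; and (2) the structure of $Rc$ on the focal varieties. By Lemma \ref{le-1} at least one focal variety $M_{\pm}$ is non-empty, and by Wang's theorem \cite{W} the Hessian of the transnormal function $f$ restricted there has only the eigenvalues $0$ and $\tfrac12 b'(f)=\lambda$, so $Rc_{|_{M_{\pm}}}=\operatorname{diag}[\lambda I_k,0]$; constancy of the rank then pins down the constant value $R=k\lambda$, where $k=\operatorname{rank}Rc$. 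With these two facts, if $R_1,\dots,R_k$ denote the nonzero eigenvalues at an arbitrary point of $M$, then
\begin{equation*}
\sum_{i=1}^{k}(R_i-\lambda)^{2}=|Rc|^{2}-2\lambda R+k\lambda^{2}=\lambda R-2\lambda R+\lambda R=0,
\end{equation*}
so every nonzero eigenvalue equals $\lambda$ everywhere, giving $0\le Rc\le\lambda$ in the shrinking case (resp. $\lambda\le Rc\le 0$ in the expanding case), and rigidity follows at once from the Petersen--Wylie criterion \cite{PW1} that you also cite. No eigenvalue ODEs, parallel distributions, or de Rham decomposition are needed: all of that is subsumed in the quoted criterion, which is the correct place to invoke it.
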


\begin{remark}\label{re-6}
A special family of rigid gradient Ricci solitons are the homogeneous ones. Indeed, it was shown in  \cite{PW2} that any homogeneous gradient Ricci soliton is rigid. An immediate application of Theorem \ref{rigidity} shows that the homogeneity condition can be replaced by the more general one of curvature homogeneity. A Riemannian manifold $(M,g)$ is said to be $k$-\emph{curvature homogeneous} if for any two points $p,q\in M$ there exists a linear isometry $\varphi_{pq}:T_pM\rightarrow T_qM$ which preserves the curvature tensor and its covariant derivatives up to order $k$, i.e., $\varphi_{pq}^*\nabla^lR(q)=\nabla^lR(p)$  for all $l=0,\dots,k$. Clearly, any locally homogeneous Riemannian manifold is $k$-curvature homogeneous for all $k$, and conversely, if $(M^n,g)$ is $k$-curvature homogeneous for sufficiently large $k$ (for instance $k\geq \frac{1}{2}n(n-1)$), then it is locally homogeneous. However, there are plenty of examples of Riemannian manifolds which are $0$-curvature homogeneous but not homogeneous (even there are many curvature homogenous Riemannian manifolds whose curvature tensor does not correspond to any homogeneous space~\cite{BoKoVa}). 
Since any curvature homogeneous Riemannian manifold has constant Ricci curvatures, the Ricci operator has constant rank, and thus it follows from Theorem \ref{rigidity} that any $0$-curvature homogeneous complete gradient Ricci soliton is rigid, which generalizes \cite[Theorem 1.1]{PW2}.
\end{remark}

Gradient Ricci solitons with constant scalar curvature are rigid in dimension three \cite{PW3} (in the shrinking case rigidity was shown without any additional assumption in \cite{CCZ}). We extend the above results to the case of K\"ahler Ricci solitons by showing that

\begin{theorem}
\label{kahler-case}
Any K\"ahler gradient Ricci soliton with constant scalar curvature is rigid in dimension $n=4,6$.
\end{theorem}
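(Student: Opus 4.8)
The plan is to deduce everything from Theorem \ref{rigidity} by proving that, for a K\"ahler gradient Ricci soliton $(M^{2m},g,J,f)$ with constant scalar curvature $R$ and $m\in\{2,3\}$, the Ricci operator has constant rank. The K\"ahler hypothesis is precisely what makes the low-dimensional bookkeeping work: since $g$ is K\"ahler, the Ricci operator commutes with $J$, so each of its eigenspaces is $J$-invariant and hence even-dimensional. In particular the rank is even and every eigenvalue occurs with even multiplicity.

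First I would record the two pointwise consequences of constant scalar curvature. Because $R$ is constant, the soliton identity $\tfrac12\nabla R=Rc(\nabla f)$ gives $Rc(\nabla f)=0$, while the trace identity $\tfrac12\Delta R-\tfrac12\langle\nabla f,\nabla R\rangle=\lambda R-\lvert Rc\rvert^2$ reduces to the global relation $\lvert Rc\rvert^2=\lambda R$. On the open set $U=\{\nabla f\neq 0\}$ the first relation says $\nabla f\in\ker Rc$; since $\ker Rc$ is $J$-invariant, the two-plane $\operatorname{span}\{\nabla f,J\nabla f\}$ lies in the kernel, so $Rc$ preserves the $(2m-2)$-dimensional $J$-invariant distribution $\mathcal D=\{\nabla f,J\nabla f\}^{\perp}$ and acts there as a symmetric operator commuting with $J$. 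Thus on $U$ the rank is the rank of $Rc|_{\mathcal D}$, which is at most $2m-2$.

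The heart of the argument is that on $\mathcal D$ the Ricci operator has at most $m-1$ distinct eigenvalues, each of even multiplicity, and these are pinned down by the two universal constants $R$ and $\lvert Rc\rvert^2$. For $m=2$ there is a single eigenvalue $\mu$ on the two-real-dimensional $\mathcal D$, and $R=2\mu$ forces $\mu$ to be constant. For $m=3$ there are two eigenvalues $\mu_1,\mu_2$ on the four-real-dimensional $\mathcal D$, and the relations $2(\mu_1+\mu_2)=R$ and $2(\mu_1^2+\mu_2^2)=\lambda R$ fix both $\mu_1+\mu_2$ and $\mu_1\mu_2$ as constants; hence $\{\mu_1,\mu_2\}$ is a fixed pair of roots of a quadratic with constant coefficients. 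In either case the full spectrum of $Rc$ is constant on $U$. Since gradient Ricci solitons are real-analytic, $U$ is dense once $f$ is non-constant (the case of constant $f$ being Einstein, hence rigid), and continuity of the Ricci eigenvalues propagates their constancy to all of $M$, including the critical set of $f$. Therefore $Rc$ has constant rank on $M$ and Theorem \ref{rigidity} applies.

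I expect the main obstacle to be exactly the point at which the dimension restriction enters: the scheme works precisely because $\mathcal D$ has complex dimension $m-1\leq 2$, so that the at most two eigenvalues of $Rc|_{\mathcal D}$ are recoverable from the two a priori constant invariants $R$ and $\lvert Rc\rvert^2$. For $m\geq 4$ one would need a third globally constant symmetric function of the eigenvalues, such as $\operatorname{tr}(Rc^3)$, and there is no soliton identity forcing this to be constant when only $R$ is; this is why the method halts at $n=6$. A secondary technical point to verify carefully is the passage from $U$ to the critical points of $f$: here I would rely on continuity of the spectrum rather than any analysis of the focal set, which is clean precisely because the eigenvalues are already known to be globally constant on the dense set $U$.
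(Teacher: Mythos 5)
Your proposal is correct and follows essentially the same route as the paper: the key points---$J$-invariance of the Ricci tensor forcing even eigenvalue multiplicities, the zero eigenvalue coming from $Rc(\nabla f)=\frac12\nabla R=0$, and the two constant invariants $R$ and $|Rc|^2=\lambda R$ pinning down the remaining (at most two) eigenvalues, followed by continuity and the Petersen--Wylie rigidity criterion---are exactly the paper's mechanism. The only organizational difference is that the paper factors this argument through its Theorem \ref{threeeigenvalues} (rigidity when $Rc$ has at most three distinct eigenvalues), whereas you inline that same algebra and conclude via Theorem \ref{rigidity}.
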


Previous result strongly depends on the number of different Ricci curvatures and does not hold necessarily in the general non-K\"ahler case, where we have:

\begin{theorem}\label{thm-3}
Any four-dimensional complete gradient shrinking Ricci soliton with constant scalar curvature $R\neq 2\lambda$  is rigid. Moreover, any four-dimensional gradient shrinking (resp., expanding) Ricci soliton with constant scalar curvature $R=2\lambda$ has non-negative (resp., non-positive) Ricci curvature.
\end{theorem}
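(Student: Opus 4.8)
The plan is to decompose the argument according to the possible values of the scalar curvature. By Theorem~\ref{csc}, in dimension $n=4$ one has $R\in\{0,\lambda,2\lambda,3\lambda,4\lambda\}$, and I would treat the balanced value $R=2\lambda$ separately from the rest. Two consequences of the soliton equation \eqref{gradientsoliton} with $R$ constant will be used repeatedly. First, the contracted second Bianchi identity gives $\tfrac12\nabla R=Rc(\nabla f)$, so $\nabla f$ is a null eigenvector of the Ricci operator at every point where $\nabla f\neq 0$. Second, taking the divergence of this identity and using $\mathrm{div}\,Rc=\tfrac12\nabla R$ together with $H_f=\lambda g-Rc$ yields $\Delta R=\langle\nabla R,\nabla f\rangle+2\lambda R-2|Rc|^2$; since $R$ is constant this reduces to the pointwise relation $|Rc|^2=\lambda R$.

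For the first assertion, assume $R\neq 2\lambda$ in the shrinking case, so that Theorem~\ref{csc} leaves only $R\in\{0,\lambda,3\lambda,4\lambda\}$. The extremal values $R=0$ and $R=4\lambda=n\lambda$ force the soliton to be Einstein by the result of Petersen and Wylie, hence rigid; the value $R=3\lambda=(n-1)\lambda$ is rigid by Theorem~\ref{rigid3l}; and $R=\lambda$ cannot occur for a shrinking soliton, as recalled in the introduction. Every admissible value thus yields rigidity, proving the first statement.

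The substantive case is $R=2\lambda$. Here the manifold cannot be Einstein (an Einstein soliton has $R=n\lambda=4\lambda$), so $f$ is non-constant and, by real-analyticity of gradient Ricci solitons, the regular set $\{\nabla f\neq 0\}$ is open and dense. At each regular point $\nabla f$ is a null eigenvector, so the four eigenvalues of the Ricci operator may be written $0,\mu_1,\mu_2,\mu_3$, and the two global identities above give
\begin{equation*}
\mu_1+\mu_2+\mu_3=R=2\lambda,\qquad \mu_1^2+\mu_2^2+\mu_3^2=|Rc|^2=\lambda R=2\lambda^2.
\end{equation*}
I would then observe that these two constraints alone control the sign of the $\mu_i$: fixing $\mu_3$, the remaining pair must satisfy $(\mu_1+\mu_2)^2\le 2(\mu_1^2+\mu_2^2)$, i.e. $(2\lambda-\mu_3)^2\le 2(2\lambda^2-\mu_3^2)$, which simplifies to $\mu_3(3\mu_3-4\lambda)\le 0$. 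In the shrinking case $\lambda>0$ this forces $0\le\mu_3\le\tfrac{4}{3}\lambda$, and by symmetry the same bound holds for $\mu_1$ and $\mu_2$; together with the null eigenvalue this gives non-negative Ricci curvature on the dense regular set, and hence everywhere by continuity. The expanding case $\lambda<0$ is identical, the inequality now giving $\tfrac{4}{3}\lambda\le\mu_i\le 0$ and therefore non-positive Ricci curvature.

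The crucial point, and the step I expect to require the most care, is the passage through the identity $|Rc|^2=\lambda R$: it is precisely the sum-of-squares constraint, combined with the null eigenvalue forced by $\nabla f$, that excludes the negative eigenvalues otherwise compatible with $\mathrm{tr}\,Rc=2\lambda$. Indeed, without the null direction the same two trace conditions imposed on four free eigenvalues do admit a negative one, so it is essential that the eigenvalue count be carried out at regular points, where $Rc(\nabla f)=0$. The remaining work is routine: verifying the Bochner identity with the correct normalization, and checking that the critical set of $f$ has empty interior so that non-negativity (resp.\ non-positivity) propagates by density and continuity.
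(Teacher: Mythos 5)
Your proposal is correct and takes essentially the same approach as the paper: the same enumeration of values from Theorem~\ref{csc}, with $R=0,4\lambda$ handled by the Einstein/extremal-value result of Petersen--Wylie, $R=\lambda,3\lambda$ by Theorem~\ref{rigid3l}, and $R=2\lambda$ by combining $\operatorname{tr}\,Rc=2\lambda$, $|Rc|^2=\lambda R$ and $Rc(\nabla f)=0$; your explicit Cauchy--Schwarz computation is precisely the Cheng-type inequality underlying the paper's Proposition~\ref{prop}, which together with Remark~\ref{re:12} gives the sign of $Rc$ at every point, so your density-and-continuity step is a harmless variant. The only slip worth correcting is the parenthetical claim that an Einstein gradient soliton has $R=n\lambda$ (false for the Gaussian shrinker, which is Ricci-flat with $R=0$); what you actually need --- that $f$ is non-constant when $R=2\lambda$ --- does follow, since constant $f$ forces $Rc=\lambda g$ and hence $R=4\lambda\neq 2\lambda$.
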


\section{Proof of the results.}

The results will be obtained by considering the geometric information underlying equation \eqref{gradientsoliton}. The analysis of the level sets of the potential function plays a crucial role since our assumption of constant scalar curvature imposes some restrictions on their geometry. First of all we introduce some definitions to be used in what follows.

A non-constant $C^2$ function $f:M\rightarrow \mathbb{R}$ is said to be a {\it transnormal function} if
\begin{equation}\label{trans}
|\nabla f|^2=b(f)
\end{equation}
for some $C^2$ function $b$ on the range of $f$ in $\mathbb{R}.$ The function $f$ is said to be an {\it isoparametric function} if moreover satisfies
\begin{equation}\label{iso}
\Delta f=a(f)
\end{equation} 
for some continous function $a$ on the range of $f$ in $\mathbb{R}$ \cite{B, GT2, Mi, W}.

Equation (\ref{trans}) implies that the level set hypersurfaces of $f$ are parallel hypersurfaces and it follows from equation (\ref{iso}) that these hypersurfaces have constant mean curvature (i.e., they are isoparametric hypersurfaces).

Given a function $f:M\rightarrow \mathbb{R}$ we denote $f_{min}=\min\{f(x)\, /\, x\in M\}$ and $f_{max}=\max\{ f(x)\, /\, x\in M\},$ if they exist. If the function is transnormal we define the sets $M_-=\{x\in M\, /\, f(x)=f_{min}\}$ and $M_+=\{x\in M\, /\, f(x)=f_{max}\},$ which are called the {\it focal varieties} of $f.$ Note that $M_-$ and/or $M_+$ could be the empty set.

If $(M,g)$ is a gradient Ricci soliton the potential function (after a possible rescaling) satisfies $|\nabla f|^2=2\lambda f-R$ (see, for example \cite{Chow, PW1}) and $\Delta f=n\lambda -R$.  Thus, if the Ricci soliton has constant scalar curvature the potential function satisfies \eqref{trans} and \eqref{iso}, thus being an isoparametric function on $(M,g)$. First of all we show that at least one of the focal varieties is necessarily non-empty.

\begin{lemma}\label{le-1}
Let $(M,g)$ be a gradient Ricci soliton with constant scalar curvature. If it is shrinking then $M_-\neq \emptyset$ and if it is expanding then $M_+\neq \emptyset.$
\end{lemma}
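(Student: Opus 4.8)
The plan is to treat the shrinking case $\lambda>0$ in detail; the expanding case follows by applying the symmetric argument to $R/(2\lambda)-f$. If the potential function $f$ is constant the soliton is Einstein with $R=n\lambda$ and $M_-=M$ is trivially non-empty, so I assume $f$ is non-constant. The starting observation is that the identity $|\nabla f|^2=2\lambda f-R$ forces $2\lambda f-R\geq 0$ everywhere, hence $f\geq R/(2\lambda)$: the potential is bounded below, and its critical points occur exactly on the level set $\{f=R/(2\lambda)\}$. Thus the entire content of the lemma is to show that this infimum is actually \emph{attained}, i.e.\ that the bounded-below isoparametric function $f$ has a minimum, for then $f_{min}=R/(2\lambda)$ and $M_-\neq\emptyset$.

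The key device is to rescale the potential into an eikonal function. Setting $u=f-\tfrac{R}{2\lambda}\geq 0$, the transnormality relation becomes $|\nabla u|^2=2\lambda u$. On the open set $\{u>0\}$ the function $w=\sqrt{2u/\lambda}$ is smooth, and a direct computation gives $|\nabla w|^2=\frac{2}{\lambda}\cdot\frac{|\nabla u|^2}{4u}=\frac{2}{\lambda}\cdot\frac{2\lambda u}{4u}=1$. Hence $w$ is a distance-like function on $\{u>0\}$, vanishing exactly on the set we wish to prove non-empty, namely $M_-=\{f=R/(2\lambda)\}$.

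Now I would exploit the standard fact that a function with $|\nabla w|\equiv 1$ has geodesic gradient flow lines, since $\nabla_{\nabla w}\nabla w=\tfrac12\nabla|\nabla w|^2=0$. Picking any point $p$ with $u(p)>0$ and setting $a=w(p)>0$, let $\gamma$ be the unit-speed geodesic, defined on all of $[0,\infty)$ by completeness, with $\gamma(0)=p$ and $\dot\gamma(0)=-\nabla w(p)$. As long as $\gamma$ remains in $\{w>0\}$ it coincides with the integral curve of $-\nabla w$, along which $w(\gamma(t))=a-t$ decreases at unit rate. A continuation argument then shows that $\gamma$ cannot stay in $\{w>0\}$ beyond time $a$: otherwise $w$ would become negative, which is impossible. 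Therefore $w(\gamma(a))=0$ by continuity, so $\gamma(a)\in M_-$ and $M_-\neq\emptyset$.

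The main obstacle I anticipate is precisely this continuation step: one must carefully justify that the complete geodesic $\gamma$ and the integral curve of $-\nabla w$ agree on the maximal interval where $w>0$ (via uniqueness of geodesics with prescribed initial velocity), and that $w$ reaches $0$ in finite time $t=a$ rather than only approaching it asymptotically. Completeness of $(M,g)$ is exactly what guarantees that $\gamma$ is defined up to and including $t=a$, so that the limiting point $\gamma(a)$ lies in $M$ and realizes the minimum of $f$.
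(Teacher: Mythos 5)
Your proof is correct and follows essentially the same route as the paper: both rescale the potential into a solution $w$ of the eikonal equation $|\nabla w|^2=1$, observe that its gradient flow lines are complete unit-speed geodesics along which $w$ varies at unit rate, and conclude that the zero level set (the focal variety) must be reached. The only difference is presentational: the paper argues by contradiction (if there were no critical points, the range of the eikonal function would be all of $\mathbb{R}$, contradicting its non-negativity), whereas you follow a geodesic directly down to the level $w=0$ in finite time; your continuation step is the justified and correct way to make this direct version rigorous.
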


\begin{proof}
We need to show that the potential function has a minimum (resp., maximum) if the Ricci soliton is shrinking (resp., expanding). After translating $f$ by a constant we can suppose that $2\lambda f=|\nabla f|^2$.  This shows that $f$ has the same sign as $\lambda$, and moreover it vanishes at the same points where $\nabla f$ does so. Hence, if $p\in M$ is such that $\nabla f(p)=0$, then $f(p)=0$ and the potential function attains a minimum (resp., maximum) at $p\in M$.

We will argue by contradiction, assuming that $\nabla f\neq 0$ at any point to get a contradiction. Define the function $r:(M,g)\rightarrow\mathbb{R}$ by $r=\sqrt{\frac{2 f}{\lambda}}$. Now it follows that $\nabla r= \frac{\sqrt{\lambda}\nabla f}{2|\nabla f|}$, from where we have that $r$ satisfies the Eikonal equation $|\nabla r|^2=1.$ Thus $r$ is a distance function and the integral curves of $\nabla r$ are geodesics. Hence $\nabla r$ is a complete vector field, due to completeness of $(M,g)$. So, if $\nabla f\neq 0$ then the range of $r$ must be $\mathbb{R},$ which contradicts the fact that $r$ is non-negative. This finishes the proof.
\end{proof}

\begin{remark}\rm
Cao and Zhou \cite{CZ} showed that if $(M,g)$ is a shrinking gradient Ricci soliton, then the potential function satisfies
$\displaystyle
\frac{\lambda}{2} (r(p)-c_1)^2 \leq f(p) \leq \frac{\lambda}{2} (r(p)-c_1)^2$, 
where $r(x)=d(p_0,p)$ is the distance function from some fixed point $p_0 \in M$, and $c_1$ and $c_2$ are positive constants. Previous inequalities provide an alternative proof of the fact that  $f$ attains a global minimum in the shrinking case.
\end{remark}

\begin{proof}[Proof of Theorem \ref{csc}.]
Wang proved in \cite{W} that the focal varieties $M_-$ and $M_+$ of a transnormal function are smooth submanifolds of $M$ (whenever they are not empty). 
Also it was shown in \cite{W} that the restriction of the Hessian $H_f$ of a transnormal function $f$ satisfying \eqref{trans} to $M_{\pm}$ has only two eigenvalues, $0$ and $\frac12 b'(f)$. Indeed, one has $H_f(X,Y)=0$ for all $X,Y\in TM_\pm$, and $H_f(V,W)=\frac12 b'(f) g(V,W)$ for all $V,W\in TM^ {\perp}_\pm.$

In our case, since $b(f)=2\lambda f-R$, we have that $b'(f)=2\lambda$. Now, it follows from (\ref{gradientsoliton}) that the restriction of the Ricci tensor to the (non-empty) focal submanifolds $M_\pm$ is of the form
$$
Rc_{|_{M_\pm}}=
\left(
\begin{array}{cc} 
\lambda I_k & 0 \\ 
0 & 0_{n-k} 
\end{array}\right),
$$ 
where $k=\operatorname{rank}\, Rc_{|_{M_\pm}}$. 
Previous lemma shows that at least one of the focal varieties is not empty, and hence taking traces one obtains that the possible values of the scalar curvature are quantified by the Ricci soliton constant $\lambda$, thus proving Theorem \ref{csc}.
\end{proof}

A basic fact in our analysis is the following \cite[Proposition 1.3]{PW1}:
a gradient shrinking (resp., expanding) Ricci soliton is rigid if and only if it has constant scalar curvature and $0\leq Rc\leq \lambda$ (resp., $\lambda\leq Rc\leq 0)$.

\begin{proof}[Proof of Theorem \ref{rigidity}.]
Recall that the $f$-Laplacian of the scalar curvature satisfies
$\Delta_fR=\lambda R-|Rc|^2$, from where $|Rc|^2=\lambda R$ in the constant scalar curvature setting
(see, for example \cite{PW1}).
Next, assuming that the rank of the Ricci operator is constant, say $\operatorname{rank}\, Rc=\operatorname{dim}\,M_\pm=k$, one has that the scalar curvature is $R=k\lambda$, since it is constant. Let us denote by $R_i$, $i=1,\dots,k,$ the non-zero eigenvalues of the Ricci operator at any point of $M$. Then
$$
\begin{array}{rcl}
\displaystyle
\sum_{i=1}^{k}\left(R_i-\lambda\right)^2
&=&|Rc|^2-2\lambda R + k\lambda^2\\
\noalign{\medskip}
&=&\lambda R-2\lambda R+\lambda R
=0\,.
\end{array}
$$
Hence all non-zero eigenvalues satisfy $R_1=R_2=\cdots =R_k=\lambda$, and thus it follows that  Ricci tensor has only two eigenvalues, $0$ and $\lambda$, from where it follows that  $0\leq Rc\leq\lambda$ and rigidity  is a consequence of the results in \cite{PW1}.

The proof is analogous in the expanding case, using that $M_+\neq \emptyset$ and $Rc_{|_{M_+}}=\operatorname{diag}\, [\lambda,\dots,\lambda,0,\dots,0]$ to obtain that $\lambda\leq Rc\leq 0$.
\end{proof}

\begin{remark}
\rm\label{nuevo-1}
The proof of Theorem \ref{rigidity} relies on the fact that $|Rc|^2=\lambda R$ in the constant scalar curvature setting, from where it follows that $|Rc|^2$ is constant provided that so is $R$. The converse is also true in the non-expanding setting.
In fact, since $R^2\le n|Rc|^2$, one has that $R$ is bounded if $|Rc|^2$ is constant.
If a steady 
gradient Ricci soliton has constant $|Rc|^2$, then it is Ricci flat since at the maximum/supremum of $R$, $\Delta_f R=-2|Rc|^2\leq 0$ and at the minimum/infimum of $R$,
$\Delta_f R=-2|Rc|^2\geq 0$, from where it follows that $|Rc|^2\equiv 0$ and the manifold is Ricci flat. 
Also, 
if $(M,g)$ is a shrinking gradient Ricci soliton then $R$ is constant provided that so is $|Rc|^2$. 
Indeed, at any point where $R$ attains a  maximum/supremum  $R^*$ one has that
$\Delta_f R=\lambda R^*-2|Rc|^2\leq 0$, which implies $\lambda R^* \leq 2|Rc|^2$.
Analogously, at any point where $R$ attains a  minimum/infimum  $R_*$ one has that
$\Delta_f R=\lambda R_*-2|Rc|^2\geq 0$ and thus $\lambda R_* \geq 2|Rc|^2$. Hence
$R^*\leq R_*$, which shows that $R$ is constant.
\end{remark}


It is shown in \cite{PW2} that $3$-dimensional gradient Ricci solitons with constant scalar curvature are rigid. Note that the Ricci tensor of such a soliton has at most three different eigenvalues. We extend this  result in the following

\begin{theorem}\label{threeeigenvalues}
Let $(M,g)$ be a gradient Ricci soliton with constant scalar curvature. If $Rc$ has at most three different eigenvalues then it is rigid.
\end{theorem}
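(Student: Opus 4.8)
The plan is to reduce the statement to the curvature bound $0\le Rc\le\lambda$ (respectively $\lambda\le Rc\le 0$ in the expanding case) and then invoke the characterization of rigidity quoted above from \cite{PW1}. First I would dispose of the degenerate cases. In the steady case, constant scalar curvature forces Ricci flatness \cite{PW1}, so $Rc\equiv 0$ has constant rank and rigidity follows from Theorem \ref{rigidity}. In the non-steady case Theorem \ref{csc} gives $R=m\lambda$ for an integer $0\le m\le n$; the extremal values $m=0$ and $m=n$ correspond to Ricci flat and Einstein metrics, which are rigid, so I may assume $0<m<n$ and treat the shrinking case $\lambda>0$, the expanding case being entirely analogous. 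Throughout I use the two conserved quantities $R=m\lambda>0$ and $|Rc|^2=\lambda R=m\lambda^2$, together with the soliton identity $\tfrac12\nabla R=Rc(\nabla f)$; since $R$ is constant this yields $Rc(\nabla f)=0$, so on the open set $M^\circ=\{\nabla f\neq 0\}$ the vector $\nabla f$ spans a $0$-eigendirection of $Rc$. Combined with the hypothesis of at most three distinct eigenvalues, this means that on $M^\circ$ the Ricci operator has the eigenvalue $0$ together with at most two nonzero eigenvalues.

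Next I would analyse the spectrum pointwise on $M^\circ$. Since $0$ is always present there, if $Rc$ has at most two distinct eigenvalues its spectrum is $\{0,a\}$ with $a\neq 0$, and dividing $|Rc|^2=a^2\,\mathrm{mult}(a)$ by $R=a\,\mathrm{mult}(a)$ gives $a=\lambda$; hence the spectrum is exactly $\{0,\lambda\}$ with multiplicities $n-m$ and $m$. If instead $Rc$ has exactly three distinct eigenvalues $\{0,\mu,\nu\}$ with $\mu,\nu\neq 0$, then on the open set where this holds the multiplicities $q,s$ of $\mu,\nu$ are locally constant, and $\mu,\nu$ solve the fixed system $q\mu+s\nu=R$, $q\mu^2+s\nu^2=\lambda R$. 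For fixed $q,s,R,\lambda$ this system has only finitely many solutions, so by continuity $\mu$ and $\nu$ are locally constant on the set where there are exactly three eigenvalues. In particular both the largest and the smallest eigenvalue of $Rc$ are locally constant there, whereas on the two-eigenvalue set they equal $\lambda$ and $0$ respectively.

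Finally I would propagate the boundary data from the focal variety. By Lemma \ref{le-1} the focal variety $M_-$ is non-empty, and the computation in the proof of Theorem \ref{csc} shows that along $M_-$ the Ricci operator equals $\mathrm{diag}(\lambda I_m,0_{n-m})$, so its largest and smallest eigenvalues there are $\lambda$ and $0$. Every point of $M^\circ$ lies on an integral curve of $\nabla f/|\nabla f|$, which by the construction in Lemma \ref{le-1} is a normal geodesic issuing from $M_-$. Restricting to such a geodesic $\gamma$, the largest eigenvalue $\nu(r)$ of $Rc(\gamma(r))$ is continuous, tends to $\lambda$ as $r\to 0$, equals $\lambda$ wherever there are at most two eigenvalues, and is locally constant wherever there are three. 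A one-dimensional connectedness argument then forces $\nu\equiv \lambda$ along $\gamma$: if $\nu-\lambda$ were nonzero at some point it would be a nonzero constant on the whole component of $\{\nu\neq\lambda\}$ containing it, contradicting either the value $\lambda$ at a two-eigenvalue boundary point or the limit $\lambda$ at the focal endpoint. Applying the same reasoning to the smallest eigenvalue gives $\rho\equiv 0$. Hence every Ricci eigenvalue lies in $[0,\lambda]$ along every such geodesic, so $0\le Rc\le\lambda$ on $M^\circ$ and, by continuity, on all of $M$. Rigidity now follows from \cite[Proposition 1.3]{PW1}, and the expanding case is identical after exchanging $M_-$ for $M_+$.

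The step I expect to be the main obstacle is precisely the one that forces the eigenvalues to be constant. A direct attack through the evolution of $Rc$ along the normal geodesics (equivalently, the Riccati equation for the shape operator $S$ of the level sets, via $Rc=\lambda I-|\nabla f|\,S$ on $(\nabla f)^{\perp}$) stalls, because the evolution of an individual eigenvalue involves the mixed sectional curvatures $\langle R(\cdot,\nabla f)\nabla f,\cdot\rangle$, which are not controlled by the Ricci tensor; in fact those terms cancel and the resulting identity is tautological. The hypothesis of at most three eigenvalues is what rescues the argument: it reduces the number of unknown nonzero eigenvalues to two, so that the two conserved quantities $R$ and $|Rc|^2$ pin them down algebraically up to finitely many values, converting the analytic problem of eigenvalue evolution into the purely topological one of propagating the focal-variety values by continuity.
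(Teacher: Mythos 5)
Your proposal is correct and follows essentially the same route as the paper: the heart of both arguments is that constancy of $R$ and of $|Rc|^2=\lambda R$ forces the at most two nonzero Ricci eigenvalues to satisfy $k_1R_1+k_2R_2=R$ and $k_1R_1^2+k_2R_2^2=\lambda R$, a system with only finitely many solutions, so that continuity of the eigenvalues makes the spectrum constant and rigidity follows. Your more elaborate finish (local constancy of the multiplicities, propagation of the boundary values $\lambda$ and $0$ along normal geodesics issuing from the focal variety, and the explicit reduction to the $0\le Rc\le\lambda$ criterion of \cite{PW1}) just spells out details that the paper compresses into ``by continuity the eigenvalues are constant, from where we get rigidity'' via its Theorem \ref{rigidity}.
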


\begin{proof}
Note that $Rc$ has at least one zero eigenvalue due to the constancy of the scalar curvature. Indeed, the Ricci operator satisfies $Rc(\nabla f)=\frac{1}{2}\nabla R$ at any point $p\in M$ where $(\nabla f)(p)\neq 0$, which shows that $Rc$ has a zero eigenvalue on $M\setminus M_{\pm}$ and, as shown in the proof of Theorem \ref{csc}, it also has a zero eigenvalue on $M_{\pm}$. 

Assume that it has two non-zero (possibly) different eigenvalues $R_1,R_2$ of multiplicities $k_1$ and $k_2.$ Since $R$ and $|Rc|^2=\lambda R$ are constant, $R_1$ and $R_2$ must be constant. Indeed, they solve the system of equations
$$
\left\{\begin{array}{lcr}
k_1R_1+k_2R_2=R \\
\noalign{\medskip}
k_1R_1^2+k_2R_2^2=\lambda R
\end{array}
\right.
$$
which only has two solutions if $R_1\neq R_2$ and one solution if $R_1=R_2$. Because of the continuity of the eigenvalues of the Ricci tensor it follows that they are constant, from where we get rigidity.
\end{proof}

\begin{proof}[Proof of Theorem \ref{kahler-case}]
Since for K\"ahler manifolds $(M,g,J)$ the Ricci tensor is
invariant under the action of the complex structure $J$ (that is, it satisfies
${\rm Rc}(J\cdot, J\cdot)= {\rm Rc}(\cdot, \cdot)$),
each Ricci curvature has even multiplicity.
Since the Ricci tensor has a zero eigenvalue with multiplicity at least $2$ it follows that $Rc$ has at most two different eigenvalues if $n=4$ and three different eigenvalues if $n=6.$ The result now follows from Theorem \ref{threeeigenvalues}.
\end{proof}


Theorem \ref{csc} showed that the possible values of the scalar curvature of any gradient Ricci soliton of constant scalar curvature are $R\in\{0,\lambda,\cdots , (n-1)\lambda, n\lambda\}$. Petersen and Wylie proved in \cite{PW1} that the extremal values may occur only for Einstein metrics. Next we show that the soliton is necessarily rigid if $R=(n-1)\lambda$ and that $R\neq \lambda$ in the shrinking case.

\begin{theorem}\label{rigid3l}
Any complete gradient Ricci soliton with constant scalar curvature $R=(n-1)\lambda$  is rigid. Moreover, no complete gradient shrinking Ricci soliton may exist with $R=\lambda$.
\end{theorem}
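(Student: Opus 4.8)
I would prove the two assertions separately, in both cases reusing the focal-variety computation from the proof of Theorem~\ref{csc} together with the identity $|Rc|^2=\lambda R$ valid in the constant scalar curvature setting.

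For the first assertion I would argue as in Theorem~\ref{rigidity}, but now deriving the rank from the value of $R$. On $M\setminus M_\pm$ we have $\nabla f\neq 0$ and $Rc(\nabla f)=\tfrac12\nabla R=0$, so $0$ is an eigenvalue of the Ricci operator; let $R_1,\dots,R_m$ be its nonzero eigenvalues, so $m\le n-1$. Since $R=(n-1)\lambda$ and $|Rc|^2=\lambda R=(n-1)\lambda^2$, one has $\sum_i R_i=(n-1)\lambda$ and $\sum_i R_i^2=(n-1)\lambda^2$, whence by Cauchy--Schwarz $((n-1)\lambda)^2=(\sum_i R_i)^2\le m\sum_i R_i^2=m(n-1)\lambda^2$, forcing $m\ge n-1$ and therefore $m=n-1$ with equality in Cauchy--Schwarz. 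Thus all the $R_i$ coincide and equal $\lambda$, so the Ricci operator has only the eigenvalues $0$ and $\lambda$; its rank is identically $n-1$ (the same value obtained on $M_\pm$, where $R=k\lambda$ gives $k=n-1$), and rigidity follows from Theorem~\ref{rigidity}, or equivalently from $0\le Rc\le\lambda$ (resp.\ $\lambda\le Rc\le 0$ in the expanding case) together with the Petersen--Wylie criterion.

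For the second assertion I would suppose, for a contradiction, that a complete shrinking soliton has $R=\lambda$. By Lemma~\ref{le-1} the focal variety $M_-$ is nonempty, and the computation in the proof of Theorem~\ref{csc} gives $Rc_{|M_-}=\operatorname{diag}[\lambda I_k,0_{n-k}]$ with $R=k\lambda$; since $R=\lambda$ this forces $k=1$, i.e.\ $\dim M_-=1$. The plan is then to show that the soliton is \emph{rigid}: being rigid it would be a quotient of $N\times\mathbb{R}^{\,n-\dim N}$ with $N$ Einstein, $Rc_N=\lambda g_N$, and scalar curvature $R=(\dim N)\lambda=\lambda$, so $\dim N=1$; but a one-dimensional manifold is flat, hence $Rc_N=0\neq\lambda g_N$ because $\lambda>0$, a contradiction. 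Equivalently, for a rigid soliton the focal variety $M_-$ is exactly the Einstein factor $N$, which cannot be one-dimensional when $\lambda>0$.

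The main obstacle is the rigidity step for $R=\lambda$. In contrast with the case $R=(n-1)\lambda$, the identities $\sum_i R_i=\lambda$ and $\sum_i R_i^2=\lambda^2$ no longer determine the eigenvalues: for $n\ge 4$ they are satisfied by several nonzero Ricci curvatures (for instance $\tfrac{2\lambda}{3},\tfrac{2\lambda}{3},-\tfrac{\lambda}{3}$), so Cauchy--Schwarz does not pin down the rank. What closes the argument is nonnegativity of $Rc$: if $Rc\ge 0$ then $\sum_{i<j}R_iR_j=\tfrac12\big((\sum_i R_i)^2-\sum_i R_i^2\big)=0$ with all summands nonnegative forces at most one nonzero eigenvalue, so the rank is identically $1$ and Theorem~\ref{rigidity} yields rigidity. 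Hence the heart of the matter is to propagate to all of $M$ the eigenvalue pattern $\{\lambda,0,\dots,0\}$ that already holds on $M_-$. I would attempt this either by a maximum-principle argument for the lowest eigenvalue of $Rc$ based on the soliton evolution $\Delta_f Rc=2\lambda Rc-2\mathring R(Rc)$, with $\mathring R(Rc)_{ij}=R_{ikjl}R_{kl}$, or through the tube/Riccati description of the level sets around the one-dimensional focal variety, where a direct computation gives $Rc\to\operatorname{diag}[\lambda,0,\dots,0]$ as one approaches $M_-$; in low dimensions, or whenever the number of distinct Ricci eigenvalues is at most three, one may instead invoke Theorem~\ref{threeeigenvalues} directly.
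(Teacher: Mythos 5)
Your proof of the first assertion is correct and is essentially the paper's own argument. The paper computes
$\sum_{i=1}^{n-1}(R_i-\lambda)^2=|Rc|^2-2\lambda R+(n-1)\lambda^2=\lambda R-2\lambda R+\lambda R=0$
using $|Rc|^2=\lambda R$ and $R=(n-1)\lambda$; this is your Cauchy--Schwarz equality case in different clothing. Both versions conclude that the spectrum is $\{\lambda,\dots,\lambda,0\}$, hence the rank is constant, and rigidity follows from Theorem \ref{rigidity} (equivalently from $0\le Rc\le\lambda$, resp.\ $\lambda\le Rc\le 0$, and the Petersen--Wylie criterion \cite{PW1}).

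For the second assertion you have a genuine gap, and you identify it yourself: your plan is to first prove the soliton rigid and then derive a contradiction from $\dim N=1$, but the rigidity step is precisely what you cannot establish. For $R=\lambda$ the identities $\sum_i R_i=\lambda$ and $\sum_i R_i^2=\lambda^2$ do not control the spectrum (as your example $\tfrac{2\lambda}{3},\tfrac{2\lambda}{3},-\tfrac{\lambda}{3}$ shows); Proposition \ref{prop} would require $\operatorname{rank}\,Rc\le 2$, which is not known a priori; Theorem \ref{threeeigenvalues} requires at most three distinct Ricci eigenvalues, which is an extra hypothesis, not a consequence; and the maximum-principle route founders on the term $R_{ikjl}R_{kl}$ in the evolution of $Rc$, which has no sign in dimension $\ge 4$. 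The paper avoids rigidity entirely and argues topologically: since $R=\lambda$, the nonempty focal variety $M_-$ (Lemma \ref{le-1}) has dimension $k=1$; by \cite{GT2} it is minimal, hence, being one-dimensional, totally geodesic, and it is compact because $f$ is proper (the Cao--Zhou growth estimates). Thus $M_-$ is a circle. The structure theory of transnormal functions \cite{B, Mi} then makes $M$ diffeomorphic to a vector bundle over $M_-=\mathbb{S}^1$, so $\pi_1(M)$ is infinite, contradicting Wylie's theorem \cite{Wy} that complete shrinking gradient Ricci solitons have finite fundamental group. In short, where you try (and fail) to propagate the eigenvalue pattern $\{\lambda,0,\dots,0\}$ from $M_-$ to all of $M$, the paper needs only the dimension of $M_-$ plus a global topological obstruction.
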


\begin{proof}
Let us denote by $R_{i}$, $(i=1,\dots,n-1)$ the non-zero eigenvalues of the Ricci operator at any point of $M$. It holds
$$
\begin{array}{rcl}
\displaystyle
\sum_{i=1}^{n-1}\left(R_{i}-\lambda\right)^2
&=&|Rc|^2-2\lambda R + (n-1)\lambda^2\\
\noalign{\medskip}
&=&\lambda R-2\lambda R+\lambda R=0\,.\end{array}
$$
So all non-zero eigenvalues satisfy $R_{1}=R_{2}=\cdots=R_{n-1}=\lambda$. Thus, $Rc$ has constant rank and the Ricci soliton is rigid.

To show the non-existence of complete shrinking solitons with $R=\lambda$ we argue by contradiction. If $R=\lambda$, then the focal variety $M_-\neq \emptyset$ is a minimal submanifold (see \cite{GT2}). Since it has dimension one it must be totally geodesic. Moreover it is compact since $f$ is proper. So we have that $M_-$ is $\mathbb{S}^1.$ Now, $M$ is diffeomorphic to a vector bundle over $\mathbb{S}^1$ (see \cite{B, Mi}). But this contradicts the fact that $M$ has finite fundamental group \cite{Wy}.
\end{proof}

\begin{remark}
\rm
It is shown in \cite[Corollary 2]{PW3} that the existence of a non-zero eigenvalue of the Ricci operator of multiplicity $n-1$ leads to rigidity. The assumption $R=(n-1)\lambda$ in Theorem \ref{rigid3l} is equivalent to the existence of an eigenvalue of multiplicity $n-1$ along the focal varieties $M_\pm$.
\end{remark}

Although our results are not so conclusive for other values of the scalar curvature, the Ricci curvature has sign as follows

\begin{proposition}\label{prop}
Let $(M,g)$ be a gradient shrinking (resp., expanding) Ricci soliton with constant scalar curvature $R=k\lambda$. If  $\operatorname{rank} Rc \leq k+1$, then the Ricci curvature is non-negative (resp., non-positive).
\end{proposition}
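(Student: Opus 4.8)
The plan is to work pointwise with the eigenvalues of the Ricci operator, exploiting the two scalar identities already available. Constancy of the scalar curvature gives $R=k\lambda$, while, as recalled in the proof of Theorem \ref{rigidity}, the relation $|Rc|^2=\lambda R$ holds whenever $R$ is constant. Hence at every point the eigenvalues $R_i$ of $Rc$ satisfy $\sum_i R_i=k\lambda$ and $\sum_i R_i^2=k\lambda^2$. As noted in the proof of Theorem \ref{threeeigenvalues}, at least one eigenvalue vanishes, so these two identities are really carried by the non-zero eigenvalues $R_1,\dots,R_m$, whose number is $m=\operatorname{rank}Rc\le k+1$ by hypothesis.

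Next I would isolate a single non-zero eigenvalue $t=R_j$ and use the reality of the other $m-1\le k$ non-zero eigenvalues. Since the vanishing eigenvalues contribute nothing, these remaining non-zero eigenvalues are real numbers with sum $k\lambda-t$ and sum of squares $k\lambda^2-t^2$; note that $t^2\le\sum_i R_i^2=k\lambda^2$, so $k\lambda^2-t^2\ge 0$. Applying the elementary inequality $(\text{sum})^2\le(\text{number of terms})\cdot(\text{sum of squares})$ to these $m-1$ numbers gives
\[
(k\lambda-t)^2\le (m-1)\,(k\lambda^2-t^2)\le k\,(k\lambda^2-t^2),
\]
where the last step uses $m-1\le k$ together with $k\lambda^2-t^2\ge 0$. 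Expanding and simplifying, this reduces to
\[
(k+1)\,t^2-2k\lambda\, t\le 0,\qquad\text{that is}\qquad t\big[(k+1)t-2k\lambda\big]\le 0 .
\]

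Finally I would read off the sign of $t$ from this quadratic inequality. In the shrinking case $\lambda>0$ it forces $0\le t\le\tfrac{2k\lambda}{k+1}$, while in the expanding case $\lambda<0$ it forces $\tfrac{2k\lambda}{k+1}\le t\le 0$. As this holds for every non-zero eigenvalue, and the zero eigenvalues are trivially non-negative (resp.\ non-positive), the Ricci curvature has the asserted sign. The main point to watch is the bookkeeping that makes the hypothesis $\operatorname{rank}Rc\le k+1$ the precise input required: it is exactly the bound $m-1\le k$ that converts the reality constraint on the remaining eigenvalues into the sign-determining quadratic, so no analogous conclusion should be expected once the rank exceeds $k+1$.
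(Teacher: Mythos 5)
Your proposal is correct and is essentially the paper's own argument: both rest on the identities $\sum_i R_i=k\lambda$ and $\sum_i R_i^2=|Rc|^2=\lambda R=k\lambda^2$ carried by the at most $k+1$ non-zero eigenvalues, together with the algebraic fact that these constraints force all of them to share a sign, which the sign of $\lambda$ (equivalently of $R$) then determines. The only difference is presentational: the paper invokes (an adaptation of) Cheng's lemma \cite{Ch} as a black box, whereas you reprove exactly that lemma inline by isolating one eigenvalue $t$ and applying Cauchy--Schwarz to the remaining $m-1\le k$ non-zero eigenvalues, so your write-up is self-contained but not a genuinely different route.
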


\begin{proof}
Let us denote by $R_{i}$, $(i=1,\dots,k+1)$ the possibly non-zero eigenvalues of the Ricci operator at any point of $M$. We have that
$$
\sum_{i=1}^{k+1} R_{i}^2=|Rc|^2=\lambda R=k\lambda^2 = \frac{1}{k} \left(\sum_{i=1}^{r+1} R_{i}\right)^2.
$$
Now, adapting the proof of \cite[Lemma 1]{Ch}, one has that for any reals
$\alpha_i$, $i=1,\dots,r$  ($r\geq 2$) such that
$$
\displaystyle \sum_{i=1}^r \alpha_i^2\leq \frac{1}{r-1} \left(\sum_{i=1}^r \alpha_i\right)^2,
$$
then all the $\alpha_i$'s are non-negative or non-positive.

Hence all non-zero eigenvalues of the Ricci operator are positive or negative. In the shrinking case it is $R>0$ from where we get $Rc\geq 0$, and in the expanding case it holds $R<0$, which gives $Rc\leq 0$.
\end{proof}

\begin{remark}\label{re:12}
\rm
If the scalar curvature of a gradient Ricci soliton satisfies $R=(n-2)\lambda$, then $\operatorname{rank} {Rc} \leq n-1$ since $Rc$ has at least a zero eigenvalue due to the constancy of the scalar curvature. Hence the Ricci curvature is non-negative (resp., non-positive) in the shrinking (resp., expanding) case.
\end{remark}

\begin{proof}[Proof of Theorem \ref{thm-3}]
It immediately follows from Theorem \ref{rigid3l} and Proposition \ref{prop}.
\end{proof}

Recall that a gradient shrinking (expanding) Ricci soliton is rigid if and only if it has constant scalar curvature and $0\leq Rc\leq \lambda$ (resp., $\lambda\leq Rc\leq 0)$ \cite{PW1}. 
Next, we compute the $f$-Laplacian of the Ricci tensor acting on the gradient of the potential function in terms of the eigenvalues of the Ricci tensor aimed to show that if the scalar curvature is constant and $(\Delta_f Rc)(\nabla f,\nabla f)=0$, then rigidity of gradient shrinking (resp., expanding) Ricci solitons is characterized by only one of the bounds: $Rc\geq 0$ or $Rc\leq \lambda$ (resp., $Rc\leq 0$ or $Rc\geq \lambda$).

\begin{lemma}\label{deltaf}
Let $(M,g)$ be a gradient Ricci soliton with constant scalar curvature. If $R_{i}$ denote the eigenvalues of the Ricci tensor, then it holds
$$
(\Delta_fRc)(\nabla f,\nabla f)=2\sum_{i=1}^n (\lambda -R_{i})^ 2 R_{i}.
$$
\end{lemma}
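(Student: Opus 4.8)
The plan is to contract the drift--Laplacian of the Ricci tensor with $\nabla f\otimes\nabla f$ and to exploit that constant scalar curvature makes $\nabla f$ a null eigenvector of $Rc$, thereby reducing everything to traces of powers of the Ricci operator. Two facts drive the computation: as in the proof of Theorem \ref{threeeigenvalues}, the soliton identity $Rc(\nabla f)=\tfrac12\nabla R$ together with $R$ constant gives $R_{ij}\nabla^jf=0$; and, as in the proof of Theorem \ref{rigidity}, one has $|Rc|^2=\lambda R$, which is then constant, so $\nabla_{\nabla f}|Rc|^2=0$.

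First I would record the soliton curvature identity obtained by commuting the third covariant derivatives of $f$ and substituting the Hessian from \eqref{gradientsoliton}, namely $R_{ikjl}\nabla^jf=\nabla_iR_{kl}-\nabla_kR_{il}$ (up to the ambient sign convention for the curvature tensor). Contracting this once more with $\nabla^if$, and using $R_{il}\nabla^if=0$ together with $\nabla_i\nabla_jf=\lambda g_{ij}-R_{ij}$ to annihilate the resulting boundary terms, I obtain a closed expression for the curvature in the $\nabla f$--direction purely in terms of the Ricci operator: $R_{ikjl}\nabla^if\nabla^jf=(\nabla_{\nabla f}Rc)_{kl}+\lambda R_{kl}-(Rc^2)_{kl}$.

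Next I would invoke the standard Lichnerowicz--type identity for gradient Ricci solitons, $\Delta_fR_{ij}=2\lambda R_{ij}-2R_{ikjl}R^{kl}$, and evaluate it on $(\nabla f,\nabla f)$. The term $2\lambda R_{ij}\nabla^if\nabla^jf$ vanishes because $\nabla f$ lies in the kernel of $Rc$, so $(\Delta_fRc)(\nabla f,\nabla f)=-2R_{ikjl}R^{kl}\nabla^if\nabla^jf$. Substituting the directional identity of the previous step and contracting with $R^{kl}$, the term $(\nabla_{\nabla f}Rc)_{kl}R^{kl}=\tfrac12\nabla_{\nabla f}|Rc|^2$ vanishes, while the remaining algebraic terms collapse to $-2\bigl(\lambda|Rc|^2-\operatorname{tr}(Rc^3)\bigr)=2\bigl(\operatorname{tr}(Rc^3)-\lambda^2R\bigr)=2\bigl(\sum_iR_i^3-\lambda^2R\bigr)$.

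It then remains to recognise this as the claimed sum. Expanding $\sum_i(\lambda-R_i)^2R_i=\lambda^2\sum_iR_i-2\lambda\sum_iR_i^2+\sum_iR_i^3$ and inserting $\sum_iR_i=R$ and $\sum_iR_i^2=|Rc|^2=\lambda R$ gives $\sum_i(\lambda-R_i)^2R_i=\sum_iR_i^3-\lambda^2R$, which finishes the proof. I expect the only genuine obstacle to be the curvature bookkeeping of the second and third steps: the quantity $R_{ikjl}R^{kl}\nabla^if\nabla^jf$ a priori involves the full Riemann tensor, and the crux is that the soliton's second Bianchi identity, combined with the constancy of both $R$ and $|Rc|^2$, annihilates every derivative term and leaves only a self--contraction of the Ricci operator. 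Some care is needed to keep the curvature--sign conventions in the two invoked identities matched, so that the contraction produces $+2\sum_i(\lambda-R_i)^2R_i$ rather than its negative.
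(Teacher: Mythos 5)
Your proof is correct, but it follows a genuinely different route from the paper's. The paper's argument never touches the full Riemann tensor: at a point $p$ with $\nabla f(p)\neq 0$ it takes normal coordinates adapted to an eigenbasis of $Rc$ and simply differentiates the identity $Rc(\cdot,\nabla f)=0$ twice, using $\nabla_X\nabla f=\lambda X-Rc(X)$ to turn every derivative of $Rc$ evaluated against $\nabla f$ into an algebraic expression in the eigenvalues; the drift term $(\nabla_{\nabla f}Rc)(\nabla f,\nabla f)$ dies for the same reason, and $2\sum_i(\lambda-R_i)^2R_i$ comes out directly, without ever invoking $|Rc|^2=\lambda R$. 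You instead combine two standard global soliton identities --- the commutation identity $R_{ikjl}\nabla^jf=\nabla_iR_{kl}-\nabla_kR_{il}$ and the Lichnerowicz-type identity, which the paper itself records before Theorem \ref{dimension4} in the form $\Delta_fR_i=2\lambda R_i+L(Rc)_{ii}$ --- together with the constancy of $|Rc|^2=\lambda R$ to kill the term $(\nabla_{\nabla f}Rc)_{kl}R^{kl}=\tfrac12\nabla_{\nabla f}|Rc|^2$, arriving at $2\bigl(\operatorname{tr}(Rc^3)-\lambda^2R\bigr)$, which equals the claimed sum. Your worry about sign conventions is legitimate but resolvable, not a gap: in a fixed convention (the one in which the Ricci tensor is the contraction of the curvature tensor over its first and third slots) the two identities hold simultaneously exactly as you wrote them, your intermediate formula $R_{ikjl}\nabla^if\nabla^jf=(\nabla_{\nabla f}Rc)_{kl}+\lambda R_{kl}-(Rc^2)_{kl}$ is then correct (it follows from contracting the commutation identity with $\nabla^if$ and differentiating $R_{il}\nabla^if=0$), and the final contraction does produce $+2\sum_i(\lambda-R_i)^2R_i$. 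As for what each approach buys: the paper's computation is more elementary and self-contained, needing only the soliton equation and $\nabla R=0$; yours leans on more machinery, but it is more structural --- it isolates exactly how the curvature term $R_{ikjl}R^{kl}\nabla^if\nabla^jf$ collapses to self-contractions of the Ricci operator, it meshes with the tensor $L(Rc)$ used later in the proof of Theorem \ref{dimension4}, and the output in the form $2\bigl(\operatorname{tr}(Rc^3)-\lambda^2R\bigr)$ makes transparent the role of traces of powers of $Rc$ highlighted in Remark \ref{nuevo-2}.
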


\begin{proof}
Let us consider $p\in M$ such that $(\nabla f)(p)\neq 0$. Let $\{E_1,\cdots, E_n\}$ be eigenvectors of $Rc$ at $p$ and take normal coordinates centered at $p$ induced by the orthonormal basis $\{ E_1,\dots, E_n\}$. Then, using that $Rc(\cdot,\nabla f)=0,$ one has
$$
\begin{array}{rcl}
\displaystyle (\Delta_f Rc)(\nabla f,\nabla f) & =& \displaystyle \sum_{i=1}^n (\nabla^2_{E_i,E_i} Rc)(\nabla f,\nabla f) -(\nabla_{\nabla f} Rc) (\nabla f,\nabla f)
\\
& = & \displaystyle
\sum_{i=1}^n (\nabla_{E_i} (\nabla_{E_i} Rc))(\nabla f,\nabla f)
\\
& = & \displaystyle
\sum_{i=1}^n \left[ \nabla_{E_i} ((\nabla_{E_i} Rc))(\nabla f,\nabla f))-2(\nabla_{E_i} Rc))(\nabla_{E_i}\nabla f,\nabla f)\right]
\\
& = & \displaystyle
\sum_{i=1}^n \left(\nabla_{E_i}(-2Rc(\nabla_{E_i}\nabla f,\nabla f))
\right.-2(\nabla_{E_i}Rc)(\nabla_{E_i}\nabla f,\nabla f)
\\
& = & \displaystyle
-2\sum_{i=1}^n [E_i Rc(\nabla_{E_i}\nabla f,\nabla f)-Rc(\nabla_{E_i}\nabla_{E_i}\nabla f,\nabla f)
\\
& & \qquad\quad
- Rc(\nabla_{E_i}\nabla f,\nabla_{E_i}\nabla f)]
\\
& = & \displaystyle
2 \sum_{i=1}^n(\lambda-R_{i})^2 R_{i}.\end{array}$$
\end{proof}

\begin{corollary}
Let $(M,g)$ be a gradient shrinking (resp., expanding) Ricci soliton with constant scalar curvature. If $(\Delta_f Rc)(\nabla f,\nabla f)=0$ and $Rc\geq 0$ or $Rc\leq \lambda$ (resp., $Rc\leq 0$ or $Rc\geq \lambda$) then the soliton is rigid.
\end{corollary}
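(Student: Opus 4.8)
The plan is to convert both hypotheses into algebraic identities among the eigenvalues $R_i$ of the Ricci operator and then combine them with the given one-sided bound. By Lemma \ref{deltaf}, the assumption $(\Delta_f Rc)(\nabla f,\nabla f)=0$ is equivalent to $\sum_{i=1}^n R_i(\lambda-R_i)^2=0$ at every point where $\nabla f\neq 0$. On the other hand, constancy of the scalar curvature yields $|Rc|^2=\lambda R$, that is, $\sum_i R_i^2=\lambda\sum_i R_i=\lambda R$. These two relations, together with $\sum_i R_i=R$, are the only ingredients I would need.

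First I would expand the first identity using the other two. Writing $R_i(\lambda-R_i)^2=\lambda^2 R_i-2\lambda R_i^2+R_i^3$ and summing gives $0=\lambda^2 R-2\lambda(\lambda R)+\sum_i R_i^3$, so $\sum_i R_i^3=\lambda^2 R$. From here the key auxiliary identity follows:
$$
\sum_{i=1}^n R_i^2(R_i-\lambda)=\sum_i R_i^3-\lambda\sum_i R_i^2=\lambda^2 R-\lambda(\lambda R)=0.
$$
Thus I have two vanishing sums at my disposal, namely $\sum_i R_i(\lambda-R_i)^2=0$ and $\sum_i R_i^2(R_i-\lambda)=0$, whose individual terms have, respectively, the sign of $R_i$ and of $R_i-\lambda$.

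Now I would run the sign analysis. Consider the shrinking case (the expanding case is entirely analogous, with the roles of the two identities exchanged). If $Rc\geq 0$, every term $R_i(\lambda-R_i)^2$ is non-negative, so the vanishing of $\sum_i R_i(\lambda-R_i)^2$ forces each term to vanish and hence $R_i\in\{0,\lambda\}$ for all $i$. If instead $Rc\leq\lambda$, then every term $R_i^2(R_i-\lambda)$ is non-positive, so the vanishing of the auxiliary sum $\sum_i R_i^2(R_i-\lambda)$ again forces $R_i\in\{0,\lambda\}$ for all $i$. In either subcase the eigenvalues lie in $\{0,\lambda\}$ on the dense open set where $\nabla f\neq 0$, hence $0\leq Rc\leq\lambda$ there and, by continuity of the eigenvalues, on all of $M$. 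Rigidity then follows from the Petersen--Wylie characterization recalled above. In the expanding case one argues symmetrically: the bound $Rc\leq 0$ makes $\sum_i R_i(\lambda-R_i)^2=0$ a sum of non-positive terms, while $Rc\geq\lambda$ makes $\sum_i R_i^2(R_i-\lambda)=0$ a sum of non-negative terms, and in both cases $R_i\in\{0,\lambda\}$, giving $\lambda\leq Rc\leq 0$.

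The only genuinely delicate point is the treatment of the hypothesis $Rc\leq\lambda$ (respectively $Rc\geq\lambda$): under this bound the raw identity coming from Lemma \ref{deltaf} is not sign-definite, since $R_i(\lambda-R_i)^2$ can be negative. The main step is therefore to notice that the auxiliary identity $\sum_i R_i^2(R_i-\lambda)=0$, obtained for free by combining Lemma \ref{deltaf} with $|Rc|^2=\lambda R$, has terms that do become sign-definite precisely under this bound. Once that reformulation is in hand, each case reduces to the elementary observation that a vanishing sum of sign-definite terms is termwise zero, and the rest is routine.
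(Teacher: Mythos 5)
Your proposal is correct and follows essentially the same route as the paper: both proofs combine Lemma \ref{deltaf} with the identity $|Rc|^2=\lambda R$ to obtain the two vanishing sums $\sum_i R_i(\lambda-R_i)^2=0$ and $\sum_i R_i^2(R_i-\lambda)=0$, observe that each one-sided bound on $Rc$ makes the terms of one of these sums sign-definite (forcing $R_i\in\{0,\lambda\}$), and then invoke the Petersen--Wylie characterization of rigidity via $0\leq Rc\leq\lambda$ (resp.\ $\lambda\leq Rc\leq 0$). The only difference is cosmetic: the paper states the equality of the two sums directly from $|Rc|^2=\lambda R$, while you derive the auxiliary sum by first computing $\sum_i R_i^3=\lambda^2 R$; the algebra is identical.
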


\begin{proof}
First of all note that, since $|Rc|^2=\lambda R,$ one has
$\sum_{i=1}^n(\lambda-R_{i})^2 R_{i}=\sum_{i=1}^n R_{i}^2(R_{i}-\lambda)$.
Now it follows from the constancy of the scalar curvature and $(\Delta_f Rc)(\nabla f,\nabla f)=0$ that $\sum_{i=1}^n(\lambda-R_{i})^2 R_{i}=0$. Hence
$$
\sum_{i=1}^n(\lambda-R_{i})^2 R_{i}=\sum_{i=1}^n R_{i}^2(R_{i}-\lambda)=0.
$$
Thus, under our assumptions on the bounds of the Ricci tensor we get that $R_{i}\in\{0,\lambda\}$, and thus the Ricci curvatures are constant, which shows rigidity.
\end{proof}

As shown in Theorem \ref{thm-3}, four-dimensional complete shrinking gradient Ricci solitons with constant scalar curvature $R\neq 2\lambda$ are rigid. If the scalar curvature satisfies $R=2\lambda$ we need some additional assumptions to get rigidity.
Before stating the next result, we recall some basic facts about scalar curvature invariants. The space of scalar curvature invariants of order two, $I(1,n)$, is one-dimensional and it is generated by the scalar curvature $R$. The space corresponding to four-order invariants $I(2,n)$ is generated by $\{ R, \Delta R, |Rc|^2, |Rm|^2\}$. Some other contractions of the curvature and the Ricci tensors, giving rise to new $(0,2)$-symmetric tensors fields ($\check{R}$, $\check{Rc}$ and $L(Rc)$), will be used in what follows. With respect to an orthonormal basis, set 
$$
\check{R}_{ij}=\sum_{abc} R_{abci}R_{abcj},\quad
\check{Rc}_{ij}=\sum_{a}Rc_{ai}Rc_{aj},\quad
L(Rc)_{ij}=2\sum_{ab}R_{iabj}Rc_{ab}.
$$
Considering the four-dimensional Gauss-Bonnet integrand $|Rm|^2-4|Rc|^2+R^2$, the following curvature identity is shown in \cite{EPS}:
$$
\check{R}-2\check{Rc}-L(Rc)+R Rc=\frac{1}{4}(|Rm|^2-4|Rc|^2+R^2)g.
$$

Next, observe that the f-Laplacian of the principal Ricci curvatures (see, for example \cite{ELNM}) is given by
$$
\Delta_fR_{i}=2\lambda R_{i}-2 R_{ikis}Rc^{ks}=2\lambda R_i + L(Rc)_{ii}.
$$

\begin{theorem}\label{dimension4}
Let $(M,g)$ be a $4$-dimensional complete gradient shrinking Ricci soliton with constant scalar curvature $R=2\lambda.$ Then it is rigid if and only if one of the following assumptions holds:
\begin{itemize}
\item[(i)] $Rc\leq \lambda$,
\item[(ii)] $(\Delta_f Rc)(\nabla f,\nabla f)=0$,
\item[(iii)] 
$\check{R}(\nabla f,\nabla f)\leq \frac{1}{4}( |Rm|^2-4\lambda^2)g(\nabla f,\nabla f)$

\end{itemize}
\end{theorem}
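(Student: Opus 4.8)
The plan is to reduce the whole statement to the Petersen--Wylie criterion \cite{PW1} that a shrinking gradient Ricci soliton of constant scalar curvature is rigid if and only if $0\le Rc\le\lambda$, and then to observe that here \emph{one} of the two bounds holds automatically. Indeed, since $n=4$ and $R=2\lambda=(n-2)\lambda$, the Ricci operator always has a zero eigenvalue because $Rc(\nabla f)=\tfrac12\nabla R=0$ by constancy of $R$; hence $\operatorname{rank} Rc\le 3=k+1$ with $k=2$, and Theorem \ref{thm-3} (equivalently Proposition \ref{prop} together with Remark \ref{re:12}) yields $Rc\ge 0$. Consequently rigidity is equivalent to the single inequality $Rc\le\lambda$. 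This immediately settles case (i), and also the ``only if'' part of the whole statement: a rigid soliton satisfies $0\le Rc\le\lambda$, so (i) holds (and, as the computations below show, so do (ii) and (iii)). All eigenvalue statements are read on $M\setminus M_\pm$, where $\nabla f\neq 0$, and extend to the focal varieties by continuity.

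For (ii) I would invoke Lemma \ref{deltaf}: since $Rc\ge 0$, the quantity $(\Delta_f Rc)(\nabla f,\nabla f)=2\sum_{i=1}^4(\lambda-R_i)^2R_i$ is a sum of nonnegative terms, so its vanishing forces each $R_i\in\{0,\lambda\}$; thus $Rc\le\lambda$ and the soliton is rigid. This is exactly the content of the Corollary preceding the theorem, now applicable because the bound $Rc\ge 0$ is for free. Conversely, rigidity makes every $R_i\in\{0,\lambda\}$, so the same sum vanishes and (ii) holds.

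The substantive case is (iii), where I would feed the constraints into the four-dimensional Gauss--Bonnet curvature identity of \cite{EPS}. Evaluating $\check{R}-2\check{Rc}-L(Rc)+R\,Rc=\tfrac14(|Rm|^2-4|Rc|^2+R^2)g$ on $(\nabla f,\nabla f)$ and using $Rc(\nabla f)=0$ (which kills both $Rc(\nabla f,\nabla f)$ and $\check{Rc}(\nabla f,\nabla f)=|Rc(\nabla f)|^2$) together with $R=2\lambda$ and $|Rc|^2=2\lambda^2$, I obtain $\check{R}(\nabla f,\nabla f)-L(Rc)(\nabla f,\nabla f)=\tfrac14(|Rm|^2-4\lambda^2)g(\nabla f,\nabla f)$. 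Hence assumption (iii) is equivalent to $L(Rc)(\nabla f,\nabla f)\le 0$. Next I would identify this term with $(\Delta_f Rc)(\nabla f,\nabla f)$: applying the formula $\Delta_f R_i=2\lambda R_i+L(Rc)_{ii}$ recalled above in the zero-eigenvalue direction $\nabla f$ makes the $2\lambda R_i$ term drop out, giving $L(Rc)(\nabla f,\nabla f)=(\Delta_f Rc)(\nabla f,\nabla f)$. Combining this with Lemma \ref{deltaf} and $Rc\ge 0$, the latter is nonnegative, so (iii) forces $(\Delta_f Rc)(\nabla f,\nabla f)=0$, which is case (ii) and yields rigidity. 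The converse is the same chain read backwards: rigidity gives $(\Delta_f Rc)(\nabla f,\nabla f)=0$, hence $L(Rc)(\nabla f,\nabla f)=0$ and equality in the identity, so (iii) holds.

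The main obstacle I anticipate is the bookkeeping in the third step: correctly contracting the $(0,2)$-tensor identity with $\nabla f$, verifying that both $\check{Rc}(\nabla f,\nabla f)$ and $Rc(\nabla f,\nabla f)$ vanish, and---most delicately---justifying $L(Rc)(\nabla f,\nabla f)=(\Delta_f Rc)(\nabla f,\nabla f)$, which hinges on $\nabla f$ being an eigenvector of $Rc$ with eigenvalue $0$. Everything else amounts to sign-checking in the Gauss--Bonnet identity and the elementary remark that $Rc\ge 0$ turns the relevant cubic expression $\sum_i(\lambda-R_i)^2R_i$ into a sum of nonnegative terms.
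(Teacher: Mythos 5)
Your proposal is correct and follows essentially the same route as the paper: the free bound $Rc\geq 0$ from Proposition \ref{prop} and Remark \ref{re:12} reduces (i) to the Petersen--Wylie criterion, (ii) is handled via Lemma \ref{deltaf} exactly as in the Corollary, and (iii) is reduced to (ii) by evaluating the Euh--Park--Sekigawa identity on $\nabla f$ and using $\Delta_f R_1=L(Rc)_{11}$ in the zero-eigenvalue direction. Your explicit justification that $L(Rc)(\nabla f,\nabla f)=(\Delta_f Rc)(\nabla f,\nabla f)$, and the remark that all three conditions then hold conversely for rigid solitons, match the paper's argument step for step.
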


\begin{proof}[Proof of Theorem \ref{dimension4}]
Under our first assumption, $Rc\leq \lambda$, we have that $0\leq Rc\leq \lambda$, from where it follows rigidity \cite{PW1}.
Now, assume that $(\Delta_f Rc)(\nabla f,\nabla f)=0$. It follows from previous lemma that $\sum_{i=1}^4(\lambda-R_{i})^2 R_{i}=0$. Since $Rc\geq 0$ we have that $R_{i}\in\{0,\lambda\}$. Thus $Rc$ has constant eigenvalues and the soliton is rigid.

Let $\{E_1,E_2,E_3,E_4\}$ be an orthonormal basis consisting of eigenvectors of the Ricci tensor with $E_1=\frac{\nabla f}{|\nabla f|}$. Then $\Delta_f R_1=L(Rc)_{11}$.
Since $R=2\lambda$, it follows from Proposition \ref{prop} and Remark \ref{re:12} that 
$\Delta_f R_{1}=\frac{1}{|\nabla f|^2(p)}(\Delta_f Rc)(\nabla f,\nabla f)\geq 0$.

Now, since the scalar curvature is constant $R=2\lambda$, $Rc(E_1,E_1)=\check{Rc}(E_1,E_1)=0$ and 
$|Rm|^2-4|Rc|^2+R^2=|Rm|^2-4\lambda^2$. Hence the curvature identity above becomes
$$
L(Rc)(E_1,E_1)=\check{R}(E_1,E_1)-\frac{1}{4}|Rm|^2+\lambda^2.
$$
Now, it follows from Assumption $(iii)$ that $L(Rc)(E_1,E_1)\leq 0$, and hence $\Delta_f R_{1}=0$ and rigidity is a consequence of Assumption $(ii)$.

Finally observe that conditions $(i)-(iii)$ are satisfied by any rigid four-dimensional gradient Ricci soliton.
\end{proof}

\begin{remark}\rm\label{nuevo-2}
As a final observation, note that a 
\emph{complete gradient Ricci soliton is rigid if and only if the Ricci operator $Rc$ and its powers $Rc^2$, $Rc^3$ and $Rc^4$ have constant traces.}
In fact, proceeding as in the proof of Theorem \ref{rigidity} one has that the function
$$
\sum_iR_i^2(\lambda-R_i)^2=\sum_iR_i^4-2\lambda\sum_i R_i^3+\lambda^2\sum_i R_i^2
$$ 
is constant provided that the traces of the first powers of the Ricci operator are so. Since its value on the focal submanifold is zero we have that  the function $\sum_iR_i^2(\lambda-R_i)^2$ vanishes on $M$. Thus $R_i\in\{0,\lambda\}$, which shows that the Ricci curvatures are constant, from where it follows that the soliton is rigid.

Further note that while the constancy of the scalar curvature implies the constancy of the trace of 
 $Rc^2$, the functions  $\operatorname{tr}\,Rc^3$ and $\operatorname{tr}\,Rc^4$ are not yet completely understood.
\end{remark}

\end{document}